%-----------------------------------------------------------------------
% Beginning of proc-l-template.tex
%-----------------------------------------------------------------------
%
%     This is a topmatter template file for PROC for use with AMS-LaTeX.
%
%     Templates for various common text, math and figure elements are
%     given following the \end{document} line.
%
%%%%%%%%%%%%%%%%%%%%%%%%%%%%%%%%%%%%%%%%%%%%%%%%%%%%%%%%%%%%%%%%%%%%%%%%

%     Remove any commented or uncommented macros you do not use.
\documentclass[12pt]{amsart}
\usepackage{amsmath,amssymb,amscd}
\usepackage{epsfig}
\usepackage{epsfig, graphicx,pinlabel}

\textwidth=32cc \baselineskip=16pt

\oddsidemargin .2in

\evensidemargin .2in

\newtheorem{theorem}{Theorem}

\newtheorem{prop}{Proposition}

\newtheorem{lemma}{Lemma}

\newtheorem*{theoremquote}{Theorem}

\theoremstyle{definition}

\newtheorem{remark}{Remark}

\newtheorem{definition}{Definition}

%\setbeamertemplate{theorems}[numbered]
\newcommand\setthm[1]{\renewcommand\thetheorem{#1}}

\def\a{\alpha}
\def\b{\beta}

\def\bR{\mathbb R}

\def\cB{\mathcal B}
\def\cJ{\mathcal J}
\def\cA{\mathcal A}
%boldface collection

\def\rank{\textup{rank}\, }

\input xy
\xyoption{all}

\input epsf
\epsfxsize=2in

%%% Hyperplane arrangements

\def\girc{\gamma^{(i)}_{r,c}}
\def\cgirc{\overline{\gamma}^{(i)}_{r,c}}

\linespread{1.16}
\begin{document}

% \title[short text for running head]{full title}
\title[Characteristic polynomial of $\mathcal J_n$]
{Enumeration of graphs and the characteristic polynomial of the hyperplane arrangements $\mathcal J_n$}
%    Only \author and \address are required; other information is
%    optional.  Remove any unused author tags.

%    author one information
% \author[short version for running head]{name for top of paper}

\author{Joungmin Song}
\curraddr{Division of Liberal Arts \& Sciences, GIST\\ Gwangju, 61005, Korea}\address{}

\email{songj@gist.ac.kr}
\thanks{}

%    author two information

%    \subjclass is required.
\subjclass[2010]{Primary  32S22,  05C30}

\date{}

\dedicatory{}

%    "Communicated by" -- provide editor's name; required.
\commby{}

%    Abstract is required.
\begin{abstract}
We give a complete formula for the characteristic polynomial of  hyperplane arrangements $\cJ_n$ consisting of the hyperplanes $x_i+x_j=1$, $x_k=0$, $x_l=1$, $ 1\leq i, j, k, l\leq n$.
The formula is obtained by associating hyperplane arrangements with graphs, and then enumerating central graphs via generating functions for the  number of bipartite graphs of given order, size and number of connected components.

\end{abstract}

\maketitle

\section{Introduction and preliminaries}

In this paper, we give a complete formula for the characteristic polynomial of the hyperplane arrangement $\mathcal J_n$ consisting of
\begin{enumerate}
\item  the walls or hyperplanes of {\bf type I}: $ H_{\a\b} = \{x\in \bR^n| x_\a +x_\b = 1\} = H_{\b\a}$, $1\leq \a, \b\leq n$;
\item  the  walls of {\bf type II}: $  0_i:=\{x\in \bR^n |x_i=0\}, \mbox{  and  }\ 1_i :=\{x\in \bR^n|x_i=1\}$, $\forall i \in [n]:= \{1, 2, \dots, n\}$.
    \end{enumerate}
  This particular hyperplane arrangement was first considered in \cite{SONG1} where the key idea of associating the sub-arrangements of $\mathcal J_n$ with certain colored graphs was developed,   and a few basic examples were worked out. We further advanced the method in the subsequent papers \cite{SONG2, SONG3}, and with the complete formula for the characteristic polynomial, we accomplish the eventual goal of the project which is  to give an explicit formula for the number of chambers i.e. the connected components of the complement of the hyperplanes.

There are a few powerful methods for computing the characteristic polynomial of hyperplane arrangements. In \cite{Atha}, Athanasiadis showed that simply by computing the number of points missing the hyperplanes over finite fields and by using the M\"obius formula, the characteristic polynomials of many hyperplane arrangements can be computed. Our subject of interest $\mathcal J_n$ is not among the classes of examples considered in \cite{Atha} and it is indeed not even a deformation of them, but we believe that the finite field method should be applicable in our case too. Our method is closer in spirit to \cite{Zas-signed}, who showed that the characteristic polynomial is equal to the chromatic polynomial of some associated signed graphs.

For the sake of completeness, we shall briefly recall the basic definitions and main results from our previous papers \cite{SONG1,SONG2, SONG3}.
  A hyperplane arrangement $\cB$  is said to be {\it central} if the intersection of all hyperplanes in $\cB$ is nonempty.   The {\it rank} of a hyperplane arrangement is the dimension of the space spanned by the normal vectors to the hyperplanes in the arrangement.   The {\it characteristic polynomial} of an arrangement $\cA$ is defined
\[ \chi_\cA(t) = \sum_{\cB} (-1)^{|\cB|} t^{n-\rank(\cB)}\] where $\cB$ runs through all central subarrangements of $\cA$.  The importance of this polynomial is revealed in one of the most fundamental theorems in the theory of hyperplane arrangements:
\begin{theoremquote}\label{T:Zas75}\cite{Zas75}

Let $\mathcal A$ be a hyperplane arrangement in an $n$-dimensional real vector space. Let $r(\mathcal A)$ be the number of chambers and $b(\mathcal A)$  be the number of relatively bounded chambers. Then we have
\begin{enumerate}
\item $b(\mathcal A) = (-1)^{n}\chi(+1)$.
\item $r(\mathcal A)= (-1)^{n}\chi(-1)$.
\end{enumerate}
\end{theoremquote}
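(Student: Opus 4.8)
The plan is to prove both identities by a single induction on $|\cA|$ built on the deletion--restriction recurrence. Fix a hyperplane $H\in\cA$ and set $\cA'=\cA\setminus\{H\}$ (the deletion, still an arrangement in $\bR^n$) and $\cA''=\{H\cap K: K\in\cA',\ H\cap K\neq\emptyset\}$ (the restriction, an arrangement inside $H\isom\bR^{n-1}$). The first step is to establish
\[ \chi_\cA(t)=\chi_{\cA'}(t)-\chi_{\cA''}(t). \]
The cleanest way is to rewrite the definition above in its M\"obius form $\chi_\cA(t)=\sum_{x}\mu(\hat 0,x)\,t^{\dim x}$ over the intersection poset (this equivalence is Whitney's theorem), and then to analyse how the poset splits according to whether a flat lies in $H$; alternatively one can argue directly from the central-subarrangement definition by separating the subarrangements that contain $H$ from those that do not, checking the rank and sign bookkeeping.

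For part (2) the geometric input is that reinstating $H$ subdivides precisely those regions of $\cA'$ whose interiors meet $H$, and these are in bijection with the regions of $\cA''$; hence $r(\cA)=r(\cA')+r(\cA'')$. Setting $f(\cA):=(-1)^{d}\chi_\cA(-1)$ with $d$ the ambient dimension, and using that $\cA''$ drops the dimension by one, the recurrence for $\chi$ becomes $f(\cA)=f(\cA')+f(\cA'')$, which matches the region recurrence exactly. The base case is the empty arrangement, where $\chi_\cA(t)=t^n$, $r(\cA)=1$ and $(-1)^n\chi_\cA(-1)=1$; induction on $|\cA|$ then yields $r(\cA)=(-1)^n\chi_\cA(-1)$.

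Part (1) is the delicate one, and it is where I expect the main obstacle. The naive recurrence $b(\cA)=b(\cA')+b(\cA'')$ is \emph{false}: for the pencil $\{y=0,\,y=1,\,x=0\}$ in $\bR^2$, deleting $x=0$ creates a bounded region of $\cA''$ (the segment cut out on the $y$-axis) whose two preimages under the subdivision are both unbounded, so the counts do not match. The correct route, following Zaslavsky, is to work with the M\"obius form and use that $\mu(\hat 0,x)$ alternates in sign, so that $(-1)^n\chi_\cA(1)$ is a nonnegative integer, and then to identify this integer with the number of relatively bounded regions by a direct bijective or Euler-characteristic argument. Equivalently, one can try to repair the deletion--restriction induction, but only after restricting to a favourable choice of $H$ (or passing to the essentialization) so that subdivision does not manufacture spurious bounded pieces. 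Making this bounded-region bookkeeping precise -- in particular controlling how unbounded directions behave under subdivision, which is exactly what breaks for an arbitrary deleted hyperplane -- is the crux of the argument.
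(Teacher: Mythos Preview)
The paper does not prove this statement at all: it is quoted from Zaslavsky's 1975 paper \cite{Zas75} as background, with no argument supplied. So there is no ``paper's own proof'' to compare against; your proposal stands on its own.

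On its own merits: your treatment of part~(2) is the standard deletion--restriction proof and is essentially complete. For part~(1), however, what you have written is not a proof but a diagnosis. You correctly observe that the naive recurrence $b(\cA)=b(\cA')+b(\cA'')$ fails in general, and you gesture toward two possible fixes (Euler-characteristic/M\"obius bijection, or a restricted choice of $H$), but you do not carry out either one. The sentence ``Making this bounded-region bookkeeping precise \dots\ is the crux of the argument'' is an admission that the crux is missing. To close the gap you would need, for instance, to show that the recurrence $b(\cA)=b(\cA')+b(\cA'')$ \emph{does} hold whenever $\rank(\cA')=\rank(\cA)$ (i.e.\ $H$ is not a coloop of the underlying matroid), handle the coloop case separately by noting that then every region of $\cA$ is unbounded in the direction normal to $H$, and verify that the characteristic-polynomial recurrence matches this dichotomy at $t=1$. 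Alternatively, one can bypass induction entirely and compute the Euler characteristic of the bounded complex via the M\"obius function, which is closer to Zaslavsky's original argument. Either way, part~(1) as written is a plan, not a proof.
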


In \cite{SONG1}, we considered 3-colored graphs on the vertex set $[n]=\{1,2,\dots, n\}$, and defined the {\it centrality} of graphs by specifying the parity of the paths between two given colored vertices. To a hyperplane arrangement $\mathcal A$, we corresponded the graph $\Gamma_{\mathcal A}$ whose vertex set is
\[
\displaystyle{
I(\mathcal A) = \left(\bigcup_{H_{ij} \in \mathcal A} \{i,j\}\right) \cup \left(\bigcup_{0_\a\in \mathcal A} \{\a\}\right) \cup \left( \bigcup_{1_\b \in \mathcal A} \{\b\}\right)},
\]
the edge set is  $\{\{i,j\} \ | \ H_{ij} \in \mathcal A\}$, and its vertex $i$ is colored $0$ or $1$ or $ *$ respectively if $0_i \in \mathcal A$ or $1_i \in \mathcal A$, or $0_i, 1_i \not\in \mathcal A$ respectively. The main theorem that begun it all is:
\begin{theoremquote} \cite{SONG1} $\cA$ is central if and only if $\Gamma_\cA$ is central.
\end{theoremquote}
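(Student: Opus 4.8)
The plan is to translate centrality of $\cA$ into the solvability of a linear system and then read off the solvability condition directly from the combinatorics of $\Gamma_\cA$. Concretely, $\cA$ is central precisely when the system $\Sigma_\cA$ of affine equations --- one equation $x_\alpha + x_\beta = 1$ for each type I wall $H_{\alpha\beta}\in\cA$, one equation $x_i = 0$ for each $0_i\in\cA$, and one equation $x_i = 1$ for each $1_i\in\cA$ --- admits a common solution in $\bR^n$. The first observation is that $\Sigma_\cA$ decouples along the connected components of $\Gamma_\cA$: equations coming from distinct components involve disjoint sets of variables, and variables whose index lies outside $I(\cA)$ are entirely free. Hence $\cA$ is central if and only if the subsystem attached to each connected component $C$ of $\Gamma_\cA$ is independently solvable, and I would reduce the whole argument to a single component.

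Next I would solve the edge equations on a fixed component $C$. Choosing a root $r\in C$ and setting $a := x_r$, each edge equation $x_u + x_v = 1$ forces the value at a neighbour to be $1$ minus the value at $u$, so propagating along paths gives $x_w = a$ when the distance $d(r,w)$ is even and $x_w = 1-a$ when it is odd. The content here is the well-definedness of this assignment: it is consistent around every cycle exactly when $C$ is bipartite, since traversing a cycle of length $\ell$ returns the constraint $a = a$ if $\ell$ is even but $a = 1-a$, i.e. $a = 1/2$, if $\ell$ is odd. Thus if $C$ is non-bipartite the only solution of the edge equations has $x_w = 1/2$ for every $w\in C$, whereas if $C$ is bipartite with parts $A,B$ the edge equations are satisfied exactly by the one-parameter family $x_w = a$ for $w\in A$ and $x_w = 1-a$ for $w\in B$.

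Now I would impose the colour constraints. If $C$ is non-bipartite, a vertex coloured $0$ or $1$ demands $x_w\in\{0,1\}$ and clashes with the forced value $1/2$, so the subsystem is solvable if and only if every vertex of $C$ is coloured $*$. If $C$ is bipartite, a vertex $w$ coloured $0$ (resp. $1$) pins the parameter to $a=0$ or $a=1$ according to whether $w\in A$ or $w\in B$; the subsystem is solvable if and only if all coloured vertices of $C$ pin the same value of $a$. Translating "same pinned value" through the bipartition --- two coloured vertices lie in the same part if and only if every path between them has even length --- this becomes the parity statement: for any two coloured vertices $u,v$ joined by a path of length $\ell$, one needs $\ell$ even when $u,v$ carry the same colour and $\ell$ odd when they carry different colours, while the non-bipartite case is subsumed by forbidding any colour to sit on an odd closed walk. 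This is exactly the path-parity condition defining centrality of the $3$-coloured graph $\Gamma_\cA$ in \cite{SONG1}, so assembling the per-component equivalences yields the theorem.

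I expect the main obstacle to be the bookkeeping at the interface between the two degenerate features: the value $1/2$ forced by an odd cycle and its reconciliation with the purely combinatorial path-parity definition of graph centrality. One must check that the "$a=0$ versus $a=1$" dichotomy, the parity of paths, and the non-bipartite obstruction all line up with the precise definition given in \cite{SONG1}, and in particular that vertices coloured $*$ and isolated coloured vertices --- each imposing at most a single, automatically consistent constraint --- are correctly seen to pose no obstruction.
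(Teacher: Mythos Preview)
The paper does not prove this theorem: it is quoted from \cite{SONG1} as background, with no argument given here. So there is no in-paper proof to compare against; your proposal is effectively a reconstruction of the \cite{SONG1} result.

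That said, your outline is the natural and correct one. Centrality of $\cA$ is exactly solvability of the affine system you write down; the system decouples over connected components of $\Gamma_\cA$; on a bipartite component the edge equations leave a one-parameter family $(a,1-a)$ on the two parts, and the colour constraints are compatible precisely under the even/odd path-parity rule between coloured vertices; on a non-bipartite component the odd cycle forces every coordinate to $1/2$, which is incompatible with any $0$- or $1$-coloured vertex and is detected combinatorially by an odd closed walk at that vertex. This is exactly how the graph-centrality condition in \cite{SONG1} is set up, so the match you anticipate does go through.

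One bookkeeping point you should flag explicitly: the description here assigns only three colours $0,1,*$, but a subarrangement $\cA\subseteq\cJ_n$ may contain both $0_i$ and $1_i$. Such an $\cA$ is trivially non-central, and in \cite{SONG1} this is absorbed into the graph-centrality definition (a vertex carrying both colours, equivalently an even path of length $0$ between a $0$-vertex and a $1$-vertex, is already forbidden). Your framework handles it automatically once you allow a vertex to carry two pinning constraints, but since the present paper's summary of the colouring does not spell this case out, it is worth saying so in your write-up.
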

This further lead our investigation to procure methods of enumerating central graphs. In the main theorem of \cite{SONG2}, we gave a formula for the coefficients of $\chi_{\mathcal J_n}$ in terms of the number of connected graphs  and the number of connected bipartite graphs of given order and size.   In \cite{SONG3}, we gave a full description of the number of bipartite graphs using the Exponential Formula \cite[Corollary~5.1.6]{StanleyBook2}. Enumeration of bipartite graphs have been studied by many authors  (\cite{Harary58, Harary63, Harary73, Hanlon, StanleyBook2, Ueno} to name just several),  but our search did not turn up a comprehensive list of the number of bipartite graphs of give order, size and number of connected components. By combining the main results of \cite{SONG2} and \cite{SONG3}, we are now able to give a complete generating function for the coefficients of $\chi_{\mathcal J_n}$.

%All graphs are labeled.
Our two main results are as follows. Let $\bar\gamma^{(0)}_{r,c}$ denote the number of connected, non-colored, bipartite graphs without isolated vertices whose rank and cardinality are $r$ and $c$. Let
 $\bar b_{n,k}$ be the number of connected bipartite graphs of order $n$ and size $k.$

\begin{theorem}\label{T:main}
 The generating function for the number of central graphs is given by
\[
\begin{array}{l}
\Gamma(x,y,z)  =  \exp\left[ \left( \frac12 \log\left(1 + \sum_{n\ge 1, k\ge 0} \sum_{i=0}^n \binom ni \binom{i(n-i)}k \frac 1{n!}x^ny^k\right) - x\right) \frac zx \right] \cdot \\
\left( \sum_{r=0}^\infty \frac{2^r}{r!} x^r y^r\right) \cdot
\left( \sum \left( \sum_{t=1}^{c-r} 2 \overline{\gamma}^{(0)}_{r-1,c-t}\binom rt \right) \frac1{r!} x^r y^c\right) \cdot \\
\left(\exp\left(\log \left(1+ \sum_{n \ge 1, k \ge 0} \binom{\binom n2}k \frac1{n!} x^n y^k\right) - x \right) - \sum_{n\ge 2, k\ge 1} \bar b_{n,k} \frac1{n!}x^ny^k\right)
\end{array}
\]
\end{theorem}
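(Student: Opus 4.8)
The plan is to reduce the enumeration to a product of contributions from connected components and then identify each factor with a generating function for one species of connected central graph. First I would invoke the correspondence of \cite{SONG1}: since $\cA$ is central iff $\Gamma_\cA$ is central, the central subarrangements of $\cJ_n$ are counted by central colored graphs on $[n]$, so it suffices to produce the generating function $\Gamma(x,y,z)$ enumerating central graphs by the statistics that feed the characteristic polynomial. I would let $x$ record rank, $y$ record size (the number of type I walls/edges), and $z$ record the codimension of the common intersection, i.e. the number of ``free'' components that contribute a parameter to the solution space.

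The structural heart is the observation that centrality is a \emph{per-component} condition. The linear system cut out by a subarrangement --- $x_\a+x_\b=1$ for each edge, $x_i=0$ for each $0$-colored vertex, $x_i=1$ for each $1$-colored vertex --- decouples across the connected components of $\Gamma_\cA$, since distinct components share no variable. Hence a colored graph is central iff each connected component is, and by the Exponential Formula \cite[Cor.~5.1.6]{StanleyBook2} the exponential generating function $\Gamma$ factors through the generating function of \emph{connected} central graphs. Solving the one-component system pins down exactly which connected graphs are central: (i) an uncolored component with at least one edge is always central (a bipartite one leaves a free parameter, a line of solutions of rank $=$ order $-1$; a non-bipartite one forces all coordinates to $1/2$, a single point of rank $=$ order); (ii) a lone colored vertex is central; (iii) a component carrying a colored vertex is central iff it is bipartite and its colors are parity-consistent, exactly the condition isolated in \cite{SONG1}, while a non-bipartite component with a color is never central.

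With this trichotomy I would match each factor of $\Gamma(x,y,z)$ to one species and compute its generating function from the ingredients of \cite{SONG2, SONG3}. The uncolored bipartite components are the only ones that raise the codimension, so only they carry $z$: their per-component series is obtained from the exponential generating function $B(x,y)=1+\sum \binom ni\binom{i(n-i)}k\frac1{n!}x^ny^k$ of $2$-colored bipartite graphs by taking $\tfrac12\log B$ (the $\log$ extracts connected pieces, the $\tfrac12$ kills the two proper $2$-colorings of each) and subtracting $x$ to delete isolated vertices; the substitution $x^{p}\mapsto z\,x^{p-1}$ encoded by the $\tfrac zx$ converts order into rank and marks one degree of freedom per component, giving the first factor. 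The remaining rigid species --- lone colored vertices, colored parity-consistent bipartite components, and uncolored non-bipartite components --- contribute the other three factors, where the all-graph series $1+\sum\binom{\binom n2}k\frac1{n!}x^ny^k$ supplies the uncolored connected graphs (via $\log$ and the $-x$ removing isolated vertices), and the subtraction of $\sum\bar b_{n,k}\frac1{n!}x^ny^k$ strips out the connected bipartite graphs already accounted for among the free species. Here the quantities $\overline{\gamma}^{(0)}_{r,c}$ and $\bar b_{n,k}$ enter.

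I expect the main obstacle to be precisely this bookkeeping: apportioning the connected bipartite graphs between the $z$-marked free factor and the rigid factors so that each central graph is produced exactly once, and verifying that the two-coloring device $\tfrac12\log$, the rank shift $\tfrac zx$, and the colored-attachment multiplicities (the $2^r$, the $\binom rt$, and the $2\,\overline{\gamma}^{(0)}_{r-1,c-t}$) encode the parity-consistency condition with the correct weights. Once the connected generating function of each species is in hand, assembling them into the displayed product is a direct application of the Exponential Formula, with the identities of \cite{SONG2, SONG3} reducing the colored counts to the bipartite data $\overline{\gamma}^{(0)}_{r,c}$ and $\bar b_{n,k}$.
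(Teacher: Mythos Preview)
Your proposal is correct and follows essentially the same route as the paper: the paper decomposes every central graph as $G=\coprod_{i=0}^3 G^{(i)}$ into its bipartite uncolored, non-bipartite uncolored, isolated-colored, and colored-reachable pieces, proves $\Gamma=\prod_i\Gamma^{(i)}$ by a multinomial/partition argument (the per-component centrality you invoke), and then computes each $\Gamma^{(i)}$ exactly via the $\tfrac12\log$, the $z/x$ shift, the $2^r$ factor, the $\overline\gamma^{(0)}$ recursion from \cite{SONG2}, and the all-graphs-minus-bipartite identity you describe. One small correction: $y$ tracks the full cardinality of the subarrangement (type~I \emph{and} type~II walls), not just the edge count, which is why the isolated colored vertices contribute $y^r$ and why the $c-t$ shift appears in the type~(3) factor.
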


\

\begin{theorem}\label{T:char-poly}
The characteristic polynomial of $\mathcal J_n$ is given by
\[
\chi_{\mathcal J_n}(t) = \sum_{r=0}^n  \left(\sum_{c \ge 1} \, \sum_{r+\nu \le n} \binom{n}{r+\nu}(-1)^c \, \Gamma_{r,c,\nu}\right) \, t^{n-r}
\]where $\Gamma_{r,c,\nu}$ is determined by $\Gamma(x,y,z) = \sum_{r,c,\nu \ge 0} \frac{\Gamma_{r,c,\nu}}{(r+\nu)!} x^ry^cz^\nu$.
\end{theorem}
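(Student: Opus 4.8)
The plan is to unfold the definition of the characteristic polynomial, translate each central subarrangement into its associated colored graph via the correspondence of \cite{SONG1}, and thereby recognize the resulting sum as a coefficient extraction from $\Gamma(x,y,z)$. I would begin from
\[
\chi_{\mathcal J_n}(t)=\sum_{\mathcal B}(-1)^{|\mathcal B|}\,t^{\,n-\rank(\mathcal B)},
\]
the sum running over the central subarrangements $\mathcal B\subseteq \mathcal J_n$. By \cite{SONG1}, $\mathcal B$ is central exactly when the colored graph $\Gamma_{\mathcal B}$ is central, and this is a bijection between central subarrangements and central colored graphs on subsets of $[n]$. To each such $\mathcal B$ I attach three invariants: the rank $r=\rank(\mathcal B)$, the cardinality $c=|\mathcal B|$ (the total number of walls, i.e.\ edges plus colored vertices of $\Gamma_{\mathcal B}$), and the number $|I(\mathcal B)|$ of coordinates actually occurring, and I set $\nu=|I(\mathcal B)|-r$. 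Then $r+\nu=|I(\mathcal B)|$ is the number of vertices of $\Gamma_{\mathcal B}$, the sign is $(-1)^{|\mathcal B|}=(-1)^c$, and the exponent of $t$ is $n-r$.

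Next I would invoke Theorem \ref{T:main}: by its construction $\Gamma_{r,c,\nu}$ is precisely the number of central colored graphs on a \emph{labeled} vertex set of size $r+\nu$ having rank $r$ and cardinality $c$ (so that $\nu$ is the nullity). Since this count depends only on the size of the vertex set, the number of central subarrangements of $\mathcal J_n$ realizing $(r,c,\nu)$ is obtained by choosing the $r+\nu$ active coordinates out of $n$, and equals $\binom{n}{r+\nu}\Gamma_{r,c,\nu}$. Substituting and grouping by the value of $r$ gives
\[
\chi_{\mathcal J_n}(t)=\sum_{r=0}^{n}\Bigl(\sum_{c,\nu}\binom{n}{r+\nu}(-1)^c\,\Gamma_{r,c,\nu}\Bigr)t^{\,n-r},
\]
subject to $r+\nu\le n$ (one cannot activate more than $n$ coordinates). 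This is the asserted formula: for $r\ge 1$ every nonempty graph has $c\ge 1$, so the restriction $c\ge 1$ in the statement excludes only the empty subarrangement, which has $r=c=\nu=0$ and supplies the monic leading term $t^n$; reinstating that term (equivalently, reading $\Gamma_{0,0,0}=1$) completes the identity.

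The sign and degree bookkeeping is routine; the step needing genuine care is the passage from the exponential generating function to an honest count of subarrangements. The extraction $\Gamma(x,y,z)=\sum \Gamma_{r,c,\nu}\,x^ry^cz^\nu/(r+\nu)!$ records labeled structures with the $(r+\nu)!$ normalization built from a product of factors, one per type of connected component, so I must verify that multiplying by $\binom{n}{r+\nu}$ is exactly the operation converting this exponential bookkeeping into the count of subarrangements on $[n]$. I expect the heart of the matter to be the compatibility of the \emph{combinatorial} rank and nullity encoded in $\Gamma$ with the \emph{linear-algebraic} $\rank(\mathcal B)$ and $|I(\mathcal B)|-\rank(\mathcal B)$, checked uniformly across the component types distinguished by Theorem \ref{T:main} (uncolored bipartite components of rank $V-1$, color-pinned bipartite components of rank $V$, non-bipartite components, and isolated colored vertices). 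This is precisely where the contribution of each wall to the rank of the normal vectors $\{e_\a+e_\b\}$ and $\{e_i\}$ must be matched against the graph invariants, and it is the one point I would not relegate to routine verification.
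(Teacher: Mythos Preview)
Your proposal is correct and follows the same line as the paper: both reduce to the identity $\gamma_{r,c}=\sum_{r+\nu\le n}\binom{n}{r+\nu}\Gamma_{r,c,\nu}$, the point being that $\Gamma_{r,c,\nu}$ counts central graphs on the vertex set $[r+\nu]$ (not on $[n]$), so one must choose which $r+\nu$ coordinates are active. Your reading of $\nu$ as the nullity $|I(\mathcal B)|-r$ agrees with the paper's ``number of type-$(0)$ bipartite components,'' since those are precisely the components whose order exceeds their rank by one, and your observation that the restriction $c\ge 1$ omits the empty subarrangement (and hence the monic leading term $t^n$) is a valid point.
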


Principal ideas and results on generating functions are verified in Section~\ref{S:gen-functions}.  They lead to the proof of the main theorem in Section~\ref{S:proofs}.

\

Note that Theorem~\ref{T:char-poly} is readily applicable.  We demonstrate the use of the formula for small values of $n$. The table of characteristic polynomials for $n=3$ up to $n=10$ is provided.  To automate the process,  {\it Mathematica} software is employed.
%The source is  also listed in Appendix.

\section{Decomposition of central graphs}\label{S:decompo}
We recall definitions and key notions from \cite{SONG2} and set up the necessary notations.   Given a $3$-colored central graph $G$, we have a unique decomposition
\[
G = \coprod_{i=0}^3G^{(i)}
\] into four different types of subgraphs: a $3$-colored graph  is  {\it of type (i)}  if
\begin{enumerate}
\item[(i=0)] non-colored, bipartite, without isolated vertices;
\item[(i=1)] non-colored, non-bipartite, without isolated vertices;
\item[(i=2)] totally disconnected and every vertex is colored;
\item[(i=3)] every vertex has a path to a colored vertex.
\end{enumerate}
This decomposition plays an important role in the proof of the main theorem of \cite{SONG1}, and it will again be crucial in proving the main theorem of this article.

\begin{definition}
\begin{enumerate}
\item $\gamma^{(0)}_{r,c,\nu} = $ number of type $(0)$ graphs of rank $r$, cardinality $c$, and with $\nu$ connected components and no isolated vertices;
\item $\gamma^{(i)}_{r,c} =$ number of type $(i)$ graphs of rank $r$, cardinality $c$, and without isolated non-colored vertices, $i = 0, \dots, 3$. Note that $\gamma^{(0)}_{r,c} = \sum_\nu \gamma^{(0)}_{r,c,\nu}$;
\item $ \cgirc =$ number of connected type $(i)$ graphs of rank $r$ and cardinality $c$, $i = 0, \dots, 3$;
\item $\Gamma^{(0)} = \sum_{r,c,\nu \ge 0} \frac{\gamma^{(0)}_{r,c,\nu}}{(r+\nu)!} x^ry^cz^\nu$.
\item $\Gamma^{(i)} = \sum_{r,c \ge 0} \frac{\girc}{r!}x^ry^c$, $i = 1,2,3$;
\end{enumerate}
\end{definition}

%We set $\cgirc = 0$ for $i = 0,1$ and $c = 0$, since the graphs of our interest do not allow isolated non-colored vertices. Such vertices are not allowed in graphs of type (2) and (3) by definition.

\begin{remark}
\begin{enumerate}
\item Note that the type zero subgraphs deserve special treatment because they are the only ones that may fail to have full rank.
\item Since type $(2)$ graphs are totally disconnected, $\gamma^{(2)}_{r,c} = \begin{cases} 0 & r \ne c \\ 2^r & r = c \end{cases}$.
\end{enumerate}
\end{remark}

\begin{prop} $\Gamma := \prod_{i=0}^3 \Gamma^{(i)}$ is the generating function for the number of central graphs of given rank and cardinality. That is, if
\[
\Gamma(x,y,z) = \sum_{n,k,\nu \ge 0} \frac{\Gamma_{n,k,\nu}}{(n+\nu)!} x^n y^k z^\nu
\]
then  $\Gamma_{n,k,\nu}$ is the number of central graphs of rank $n$ and cardinality $k$ with $\nu$ bipartite components. In particular, $\sum_\nu \Gamma_{n,k,\nu}$  is the number of central graphs of rank $n$ and cardinality $k$.
\end{prop}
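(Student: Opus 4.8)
The plan is to derive the product formula from the unique decomposition $G = \coprod_{i=0}^3 G^{(i)}$ recalled above, via the standard product rule for exponential generating functions, with the one genuinely delicate point being the reconciliation of the two different normalizations (namely $1/(r+\nu)!$ for type $(0)$ versus $1/r!$ for types $(1),(2),(3)$).

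First I would record the behaviour of the three statistics under disjoint union. The four subgraphs $G^{(i)}$ are supported on pairwise disjoint vertex sets, so their normal vectors lie in complementary coordinate subspaces; hence rank is additive, $r(G) = \sum_i r(G^{(i)})$. Cardinality (the number of corresponding hyperplanes, i.e.\ edges together with colored vertices) is manifestly additive as well. Finally, among the four types only type $(0)$ consists of bipartite components that may be rank-deficient, so the number $\nu$ of such components equals $\nu(G^{(0)})$; this is the quantity marked by $z$, and it too adds trivially since types $(1),(2),(3)$ contribute none.

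The crux is a vertex-count identity. A connected non-colored bipartite graph on $v$ vertices has edge-normal span of rank $v-1$, whereas a type $(1)$ (odd-cycle-bearing) component, an isolated colored vertex (type $(2)$), and a type $(3)$ component each have rank equal to their number of vertices. Consequently, if $G^{(0)}$ has $v^{(0)}$ vertices and $\nu$ components then $r(G^{(0)}) = v^{(0)}-\nu$, while $r(G^{(i)}) = v^{(i)}$ for $i=1,2,3$. Summing, the total number of vertices is $V = v^{(0)} + v^{(1)} + v^{(2)} + v^{(3)} = r(G) + \nu$. This is exactly why the two normalizations are compatible: after this substitution each factor $\Gamma^{(i)}$ is an exponential generating function normalized by $1/v!$ in its own vertex count, and the product is therefore normalized by $1/(r+\nu)! = 1/V!$.

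With these observations in place, I would invoke the product rule for labeled structures. Because the four types are mutually distinguishable, the unique decomposition of \cite{SONG2} sets up a bijection between central graphs on a vertex set $[V]$ and ordered quadruples $(G^{(0)},\dots,G^{(3)})$ in which $G^{(i)}$ is a type-$(i)$ graph carried on the blocks of an ordered partition of $[V]$. Coefficient extraction in a product of such EGFs records precisely this data: multiplying the $1/v^{(i)}!$-normalized series produces the multinomial factor $\binom{V}{v^{(0)},\dots,v^{(3)}}$ distributing the $V$ labels among the four blocks in all possible ways, while the exponent bookkeeping multiplies the weights $x^{r(G^{(i)})}y^{c(G^{(i)})}z^{\nu(G^{(i)})}$. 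Hence $\prod_i \Gamma^{(i)} = \sum_V \frac{1}{V!}\sum_{G} x^{r(G)}y^{c(G)}z^{\nu(G)}$, the inner sum ranging over central graphs on $[V]$. Regrouping by the triple $(r,c,\nu)$ and using $V = r+\nu$ to collapse the sum over $V$ yields $\prod_i \Gamma^{(i)} = \sum_{r,c,\nu}\frac{\Gamma_{r,c,\nu}}{(r+\nu)!}x^r y^c z^\nu$ with $\Gamma_{r,c,\nu}$ the number of central graphs of rank $r$, cardinality $c$, and $\nu$ bipartite (type $(0)$) components, which is the assertion; setting $z=1$ and summing over $\nu$ gives the final statement. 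The main obstacle, as indicated, is not the product rule itself but verifying the rank computations that underlie $V = r+\nu$, since it is this identity that forces the otherwise mismatched normalizations into agreement.
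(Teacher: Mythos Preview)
Your argument is correct and follows essentially the same route as the paper: both proofs use the unique decomposition $G=\coprod_i G^{(i)}$, observe that types $(1),(2),(3)$ have full rank while type $(0)$ has order equal to rank plus the number of components (so that the total vertex count is $r+\nu$), and then read off the multinomial factor $\binom{r+\nu}{n_0+\nu,\,n_1,\,n_2,\,n_3}$ from the product of the four exponential generating functions. The paper phrases this as a direct coefficient comparison, whereas you frame it via the general product rule for labeled structures, but the content is identical.
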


\begin{proof} Let $n$, $k$, $\nu$ be given. It suffices to compute the number of central graphs $G$ of rank $n$, cardinality $k$ with $\nu$ bipartite components whose type $(i)$ subgraph has rank $n_i$ and cardinality $k_i$.
For $i \ne 0$, $G^{(i)}$ is of full rank and is of order $n_i$. The type $(0)$ subgraph is bipartite and its order is the sum of its rank and the number of components \cite[Theorem~2]{SONG2}. Hence such a graph  would be of order $n + \nu$.

To count the number of graphs $G$ satisfying the conditions above, we first partition $[n+\nu]$ into four sets $V_i$ of vertices such that $|V_0| = n_0+\nu$ and $|V_i| = n_i$, $i = 1,2,3$.  There are $\binom{n+\nu}{n_0+\nu, n_1, n_2, n_3}$ many such partitions. On each fixed $V_i$, there exist $\gamma^{(i)}_{n_i,c_i}$ possible graphs of type $(i)$, $i \ne 0$, and $\gamma^{(0)}_{n_0,c_0,\nu}$ many bipartite graphs on $V_0$ with $\nu$ connected components.

All in all, the number of central graphs of rank $n$ and cardinality $k$ is
\[
\sum_{\nu, n_i, k_i} \binom{n+\nu}{(n_0+\nu)  \, n_1 \, n_2 \,  n_3} \gamma^{(0)}_{n_0,k_0,\nu}\prod_{i=1}^3 \gamma^{(i)}_{n_i,k_i}
\]
where the sum runs over $\nu \in \mathbb Z$ and all partitions $n = \sum_{i=0}^3 n_i$ and $k = \sum_{i=0}^3 k_i$.

General terms of $\Gamma^{(i)}$ are of the form $x^{n_0}y^{k_0}z^\nu$ for $i = 0$ and $x^{n_i}y^{k_i}$ for $i \ne 0$. Hence $x^ny^kz^\nu$ coefficient $\frac{\Gamma_{n,k,\nu}}{(n+\nu)!}$ of $\Gamma$ equals the sum of $\frac{\gamma^{(0)}_{n_0,k_0,\nu}}{(n_0+\nu)!} \prod_{i=1}^3\frac{ \gamma^{(i)}_{n_i,k_i}}{n_i!}$ such that $n = \sum n_i$ and $k = \sum k_i$. Since $\binom{n+\nu}{(n_0+\nu) \, n_1 \, n_2 \, n_3} = \frac{(n+\nu)!}{(n_0+\nu)! n_1! n_2! n_3!}$, the assertion follows.

\end{proof}

\section{Generating functions}\label{S:gen-functions}

\subsection{Generating function for $\gamma^{(0)}_{r,c,\nu}$}\label{S:gamma0}
In \cite{SONG3}, we have computed the generating function for the number of bipartite graphs of given order, size and number of connected components. Here, we modify it slightly to remove the contribution from isolated vertices and translate the information of order and number of connected components to the rank of the graph.

 Let's recall from \cite{SONG3} the generating function for the number of connected bipartite graphs of given order, size and number of connected components. The number $\bar b_{n,k}$ of connected bipartite graphs of  order $n$ and size $k$ is generated by half of the formal logarithm of
 \begin{equation}\label{E:conn-bipartite}
 1 + \sum_{n\ge 1, k\ge 0} \sum_{i=0}^n \binom ni \binom{i(n-i)}k \frac 1{n!}x^ny^k
 \end{equation}
 i.e. its $x^ay^b$ coefficient multiplied by $a!$ gives the number of connected bipartite graphs of order $a$ and size $b$.
Let
\[
\mathcal F(x,y,z) = \exp\left(\sum_{n \ge 2; k \ge 1} \frac{\bar b_{n,k}}{n!} x^n y^k z\right).
\]
 Then
the $x^n y^kz^\nu$-coefficient of $\mathcal F$ multiplied by $n!$ is the number of order $n$, size $k$ bipartite graphs with $\nu$ connected components (\cite[Example~5.2.2]{StanleyBook2}. See also \cite{SONG3}).
Note that $n \ge 2$, $k \ge 1$ since type (0) central graphs do not have isolated vertices.

Since the rank of bipartite graph equals the order minus the number of components, we conclude that the $x^r y^k z^\nu$ coefficient of $\mathcal F(x, y, z/x)$ multiplied by $(r+\nu)!$ equals the number of type (0) central graphs of rank $r$, size $k$ with $\nu$ connected components. We put this neatly into a Proposition:

\begin{prop} (Generating function for type (0) graphs) $\Gamma^0(x,y,z)$ is given by
\[
\exp\left[ \left( \frac12 \log\left(1 + \sum_{n\ge 1, k\ge 0} \sum_{i=0}^n \binom ni \binom{i(n-i)}k \frac 1{n!}x^ny^k\right) - x\right) \frac zx \right].
\]
\end{prop}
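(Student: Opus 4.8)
The plan is to realize $\Gamma^{(0)}$ as a specialization of the series $\mathcal F(x,y,z)$, which the discussion preceding the statement has already identified (via the Exponential Formula) as the generating function for bipartite graphs without isolated vertices, graded by order in $x$, size in $y$, and number of connected components in $z$. Concretely, writing $\mathcal F(x,y,z) = \sum_{n,k,\nu} \frac{f_{n,k,\nu}}{n!}x^n y^k z^\nu$, the coefficient $f_{n,k,\nu}$ counts such graphs of order $n$, size $k$, with $\nu$ components. The only remaining tasks are a change of grading from order to rank, plus the evaluation of the exponent of $\mathcal F$ in closed form.

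First I would carry out the substitution $z \mapsto z/x$. Each monomial $x^n z^\nu$ becomes $x^{n-\nu}z^\nu$, so the power of $x$ now records $n-\nu$. By \cite[Theorem~2]{SONG2}, for a bipartite graph the rank equals the order minus the number of components, hence $r := n-\nu$ is exactly the rank and $n = r+\nu$. Reindexing gives
\[
\mathcal F(x,y,z/x) = \sum_{r,k,\nu}\frac{f_{r+\nu,k,\nu}}{(r+\nu)!}\,x^r y^k z^\nu,
\]
and since $f_{r+\nu,k,\nu} = \gamma^{(0)}_{r,k,\nu}$, comparison with the definition $\Gamma^{(0)} = \sum_{r,c,\nu}\frac{\gamma^{(0)}_{r,c,\nu}}{(r+\nu)!}x^r y^c z^\nu$ yields $\Gamma^{(0)}(x,y,z) = \mathcal F(x,y,z/x)$.

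It then remains to make the exponent of $\mathcal F$ explicit. The cited fact gives $\tfrac12\log\bigl(1 + \sum_{n\ge1,k\ge0}\sum_{i=0}^n\binom ni\binom{i(n-i)}k\frac1{n!}x^ny^k\bigr) = \sum_{n\ge1,k\ge0}\frac{\bar b_{n,k}}{n!}x^n y^k$, whereas the exponent defining $\mathcal F$ sums only over $n\ge2,\ k\ge1$. I would observe that the sole nonzero coefficient $\bar b_{n,k}$ with $n=1$ or $k=0$ is $\bar b_{1,0}=1$: a connected graph on at least two vertices must contain an edge, so $\bar b_{n,0}=0$ for $n\ge2$, while a single vertex supports no edge, so $\bar b_{1,k}=0$ for $k\ge1$. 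Thus the restricted sum equals the full logarithm minus its $x^1y^0$-term, namely $\tfrac12\log(1+\cdots)-x$, and substituting $z\mapsto z/x$ produces precisely the asserted formula.

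The content of the argument is borne entirely by the two imported ingredients, the Exponential Formula packaging of $\mathcal F$ and the rank formula (rank equals order minus components), so there is no genuine obstacle of ideas. The only point demanding care is the bookkeeping: tracking the exponent of $x$ through the substitution $z\mapsto z/x$ and the reindexing $n=r+\nu$, and checking that subtracting exactly the monomial $x$ removes the isolated-vertex (single-vertex component) contribution and nothing else.
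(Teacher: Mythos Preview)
Your proposal is correct and follows essentially the same route as the paper: the paper's argument is precisely the substitution $z\mapsto z/x$ in $\mathcal F$ to convert order to rank via $r=n-\nu$, together with the identification of the exponent of $\mathcal F$ with $\tfrac12\log(1+\cdots)-x$. If anything, you are more explicit than the paper in checking that subtracting the single monomial $x$ exactly removes the $\bar b_{1,0}$ term and nothing else.
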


%To obtain the number of bipartite graphs of order $n$, size $k$ with $\nu$ connected components, we take the sum of coefficients (times the suitable factorial) of the terms of bidegree $(n,k)$ and the weighted degree $\nu$.

\subsection{Generating function for $\gamma^{(1)}_{r,c}$}
By definition, $\gamma^{(1)}_{r,c}$ is obtained by subtracting $\gamma^{(0)}_{r,c}$ from the number $\gamma'_{r,c}$ of all non-colored graphs of rank $r$ and size $c$ without isolated vertices.
\begin{lemma}\label{L:no-isolated-vertex} The number of non-colored graphs of given order and size, and without isolated vertices is generated by
\[
\exp\left(\log \left(1+ \sum_{n \ge 1, k \ge 0} \binom{\binom n2}k \frac1{n!} x^n y^k\right) - x \right)
\]
\end{lemma}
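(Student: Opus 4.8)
The plan is to recognize the inner series as the exponential generating function (EGF) for \emph{all} labeled graphs and then to divide out the contribution of the isolated vertices. Write
\[
A(x,y) = 1 + \sum_{n \ge 1,\, k \ge 0} \binom{\binom n2}{k} \frac{1}{n!} x^n y^k.
\]
For fixed $n$, the number of labeled simple graphs on $[n]$ with exactly $k$ edges is $\binom{\binom n2}{k}$, since such a graph is determined by a choice of $k$ edges among the $\binom n2$ available slots. Hence $A(x,y)$ is the bivariate EGF in which $x$ marks vertices (with the usual $\frac1{n!}$ normalization) and $y$ marks edges, the constant term $1$ accounting for the empty graph.

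Next I would set up the decomposition of a graph by its isolated vertices. Every labeled graph $G$ on $[n]$ determines a unique partition $[n] = S \sqcup I$, where $I$ is the set of isolated vertices of $G$ and $S$ its complement. The induced subgraph $G[S]$ has no isolated vertices: if $v \in S$ then $v$ has a neighbor, and that neighbor necessarily lies in $S$, so $v$ retains a neighbor inside $S$; meanwhile $G[I]$ is edgeless. Conversely, any subset $S \subseteq [n]$ together with a graph on $S$ having no isolated vertices reconstructs $G$. This gives a bijection between labeled graphs and pairs consisting of a graph without isolated vertices on $S$ and an edgeless graph on $[n] \setminus S$, with all edges carried by the $S$-part.

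From here the result follows from the labeled product rule for EGFs. Let $B(x,y)$ denote the sought EGF for graphs without isolated vertices, and note that the EGF for edgeless graphs is $\sum_{m \ge 0} \frac{x^m}{m!} = e^x$, since there is exactly one such graph on each vertex set and it contributes no factor of $y$. The decomposition then yields the factorization $A = B \cdot e^{x}$. Solving for $B$, and using that $A$ has constant term $1$ so that $\log A$ is a well-defined formal power series, we obtain
\[
B = A\, e^{-x} = \exp\bigl(\log A - x\bigr),
\]
which is precisely the claimed formula.

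The one point requiring genuine care is the validity of the product rule in the presence of the edge-marking variable $y$. This rests on the additivity of the edge count across the partition $[n] = S \sqcup I$: by the definition of isolated vertices no edge runs between $S$ and $I$, so the weight $y^{|E(G)|}$ factors multiplicatively as $y^{|E(G[S])|} \cdot y^{0}$. Consequently $y$ may be carried through the labeled product as a passive parameter, and the factorization $A = B\cdot e^x$ holds verbatim at the level of bivariate EGFs. Everything else is a routine bookkeeping check that the empty-graph conventions on the two sides match.
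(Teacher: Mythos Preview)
Your argument is correct. The paper itself does not prove this lemma but defers to \cite{SONG3}, so there is no in-paper proof to compare against directly.

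That said, the shape of the formula, $\exp(\log A - x)$, suggests the route most likely intended: by the Exponential Formula, $\log A$ is the EGF for \emph{connected} labeled graphs; subtracting $x$ removes the unique connected graph on one vertex; re-exponentiating then assembles all graphs whose connected components each have at least two vertices, i.e.\ graphs with no isolated vertices. Your approach bypasses the passage through connected graphs and instead factors $A = B\,e^{x}$ directly via the isolated-vertex decomposition, recovering the stated expression as $A\,e^{-x}=\exp(\log A - x)$. Both arguments are short and standard; yours is slightly more elementary in that it uses only the labeled product rule rather than the full Exponential Formula, while the connected-component route explains more transparently why the answer is written in the form $\exp(\cdots - x)$.
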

See, for instance, \cite{SONG3} for a proof of the lemma above.

%\begin{proof} The sum $ f(x,y) = \sum_{n \ge 1, k \ge 1} \binom{\binom n2}k \frac1{n!} x^n y^k$ generates the number of graphs of given order $n \ge 1$ and size $k \ge 1$. By P\'olya's theorem, the formal logarithm of $f(x,y)$ generates the number of connected graphs of given order and size. We need to remove the contribution from isolated vertices. Since $x$ corresponds to the graph on one vertex with no edge, $f(x,y) - x$ generates the number of connected graphs excluding the single vertex graph. Hence $\exp(f(x,y) -x)$ generates the number of graphs none of whose connected components is an isolated vertex.
%\end{proof}

\

The number $\bar b_{n,k}$ of bipartite graphs of order $n$ and size $k$ is computed by Equation~(\ref{E:conn-bipartite}).
Hence  we have:
\begin{prop} The number of type (1) graphs of given order and size is generated by
\[
\Gamma^{(1)}(x,y) := \exp\left(\log \left(1+ \sum_{n \ge 1, k \ge 0} \binom{\binom n2}k \frac1{n!} x^n y^k\right) - x \right) - \sum_{n\ge 2, k\ge 1} \bar b_{n,k} \frac1{n!}x^ny^k
\]
\end{prop}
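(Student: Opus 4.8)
The plan is to obtain $\gamma^{(1)}_{r,c}$ by complementary enumeration inside the class of non-colored graphs that have no isolated vertices. By definition such a graph is of type $(0)$ when it is bipartite and of type $(1)$ when it is not, so the two classes partition all non-colored isolated-vertex-free graphs. Writing $\gamma'_{r,c}$ for the number of non-colored isolated-vertex-free graphs of rank $r$ and size $c$, this yields the single identity $\gamma^{(1)}_{r,c} = \gamma'_{r,c} - \gamma^{(0)}_{r,c}$, and the whole task is to promote this one subtraction to the level of generating functions.

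First I would record the observation that keeps the bookkeeping honest: every connected component of a type $(1)$ graph is non-bipartite and hence of full rank, so for such graphs the rank equals the order. This is precisely what lets the exponential generating function in the rank variable $x^{r}$ be read off from an ordinary order-and-size enumeration, and it explains why type $(1)$, unlike type $(0)$, needs no component-counting variable $z$. With this in hand, Lemma~\ref{L:no-isolated-vertex} supplies the first summand: the EGF for $\gamma'$, the number of non-colored isolated-vertex-free graphs indexed by order and size, is exactly $\exp\!\big(\log(1+\sum_{n\ge1,k\ge0}\binom{\binom n2}{k}\tfrac1{n!}x^ny^k)-x\big)$.

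Next I would identify the EGF of the bipartite part $\gamma^{(0)}$, indexed by order and size, with the subtracted sum $\sum_{n\ge2,k\ge1}\bar b_{n,k}\tfrac1{n!}x^ny^k$. Here I would invoke the description recalled from \cite{SONG3}: the bicolored-graph series of Equation~(\ref{E:conn-bipartite}) counts bipartite graphs carrying a distinguished $2$-coloring, its formal logarithm strips this down to the connected level, and half of that logarithm generates the numbers $\bar b_{n,k}$; the constraints $n\ge2$, $k\ge1$ discard the isolated and empty contributions that type $(0)$ central graphs never carry. Subtracting this bipartite series from the series of Lemma~\ref{L:no-isolated-vertex} then produces the asserted formula for $\Gamma^{(1)}(x,y)$ term by term.

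The delicate point, and the step I expect to demand the most care, is making the subtraction legitimate at the level of the series actually being subtracted: one must verify that the bipartite graphs removed are counted at exactly the same $(\text{order},\text{size})$ index at which they occur inside $\gamma'$, so that the $2$-coloring overcount of Equation~(\ref{E:conn-bipartite}) does not leak in and the isolated-vertex-free constraint is undisturbed. Reconciling the rank-indexed definition of $\Gamma^{(1)}$ with the order-indexed enumeration of Lemma~\ref{L:no-isolated-vertex}, through the rank-equals-order remark above, is what closes this gap and completes the argument.
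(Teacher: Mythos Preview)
Your proposal is essentially the paper's own argument: it too obtains $\Gamma^{(1)}$ by subtracting the bipartite (type $(0)$) contribution from the generating function of all non-colored isolated-vertex-free graphs supplied by Lemma~\ref{L:no-isolated-vertex}, and then invokes the full-rank property of type $(1)$ graphs to pass from order to rank. The only difference is cosmetic: you make the rank-equals-order reconciliation explicit, whereas the paper states it in a single closing sentence after the proposition.
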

Since type (1) graphs are of full rank, $\Gamma^{(1)}(x,y)$ precisely generates the number of type (1) graphs of given rank and size!

\subsection{Generating function for $\gamma^{(2)}_{r,c}$}

\begin{prop} The number of type (2) graphs of given order and size is generated by
\[
\Gamma^{(2)}(x,y) = \sum_{r=0}^\infty \frac{2^r}{r!} x^r y^r
\]
\end{prop}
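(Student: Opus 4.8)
The plan is to identify the type (2) graphs combinatorially, compute their rank and cardinality by hand, and then read off the generating function directly from the definition $\Gamma^{(2)} = \sum_{r,c \ge 0} \frac{\gamma^{(2)}_{r,c}}{r!} x^r y^c$. By definition a type (2) graph is totally disconnected with every vertex colored $0$ or $1$; in particular it carries no edges, so the hyperplanes of the associated subarrangement are all of type II.

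First I would pin down the cardinality. Since the graph has no edges, every wall in the associated subarrangement comes from a colored vertex: each vertex $i$ contributes exactly one wall, either $0_i$ or $1_i$. Hence on a vertex set of size $r$ the cardinality equals $r$. The one point that requires a moment's care is that cardinality counts the total number of walls (edges together with color walls), not merely the edges; for type (0) graphs the two notions coincide because there are no colors, but for type (2) it is precisely the color walls that register.

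Next I would compute the rank. The normal vector of either $0_i$ or $1_i$ is the standard basis vector attached to vertex $i$, and over distinct vertices these normals are linearly independent. Thus the rank also equals $r$, which forces $\gamma^{(2)}_{r,c} = 0$ whenever $r \ne c$. To count the diagonal case $r = c$, I would fix a vertex set of size $r$ and observe that a type (2) graph on it is determined by an independent choice of color in $\{0,1\}$ at each vertex, with no edges to specify; this gives $2^r$ such graphs, so $\gamma^{(2)}_{r,r} = 2^r$.

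Finally, substituting these values into the defining series kills all off-diagonal terms and leaves
\[
\Gamma^{(2)}(x,y) = \sum_{r \ge 0} \frac{\gamma^{(2)}_{r,r}}{r!} x^r y^r = \sum_{r=0}^\infty \frac{2^r}{r!} x^r y^r,
\]
as claimed. I expect no serious obstacle here: the entire argument is a direct enumeration, and the only subtlety is the bookkeeping convention for cardinality noted above. Once that convention is fixed, the computation is immediate, and indeed it merely makes explicit the value of $\gamma^{(2)}_{r,c}$ already recorded in the remark following the definition.
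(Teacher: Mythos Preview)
Your proposal is correct and matches the paper's approach: the paper offers no separate proof of this proposition, since it is immediate from the remark that $\gamma^{(2)}_{r,c}$ vanishes off the diagonal and equals $2^r$ when $r=c$, together with the definition of $\Gamma^{(2)}$. Your write-up simply spells out why that remark holds, which is exactly what is needed.
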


\subsection{Generating function for $\gamma^{(3)}_{r,c}$}
Since connected type (0) or equivalently, bipartite graphs are of rank one less than the order, \cite[Proposition~3]{SONG2} may be re-written as
\[
\overline\gamma^{(3)}_{r,c} = \sum_{t=1}^{c-r+1} 2 \overline{\gamma}^{(0)}_{r-1,c-t}\binom rt.
\]
\begin{lemma} $\Gamma^{(3)}(x,y)$ is given  by  $\exp \overline{\Gamma}^{(3)}$ where
\[
\overline{\Gamma}^{(3)}(x,y) = \sum \frac{\overline{\gamma}^{(3)}_{r,c}}{r!} x^ry^c = \sum \left( \sum_{t=1}^{c-r+1} 2 \overline{\gamma}^{(0)}_{r-1,c-t}\binom rt \right) \frac1{r!} x^r y^c.
\]
\end{lemma}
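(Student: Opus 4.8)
The plan is to read the first equality $\Gamma^{(3)} = \exp\overline{\Gamma}^{(3)}$ as a direct application of the Exponential Formula, since the chain of equalities defining $\overline{\Gamma}^{(3)}$ is nothing more than the definition of that generating function followed by the rewriting of \cite[Proposition~3]{SONG2} recorded immediately above the lemma. Thus all the content lies in the first equality, and I would spend no effort on the rest beyond the substitution.

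The first thing I would pin down is the structural fact that permits the Exponential Formula: a labeled graph is of type (3) precisely when each of its connected components is a connected type (3) graph. For one direction, in a type (3) graph every vertex admits a path to a colored vertex, and such a path lies inside a single component, so each component itself contains a colored vertex and is a connected type (3) graph; the converse is immediate. Hence type (3) graphs on $[n]$ correspond bijectively to sets of connected type (3) structures distributed over a set partition of $[n]$, which is exactly the input the Exponential Formula requires.

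Next I would check that the statistics recorded by $x$ and $y$ are additive across components, so that the product bookkeeping of the formula is valid. Cardinality (the $y$-variable) is manifestly additive. For the $x$-variable I would use that type (3) graphs are of full rank, so rank equals order; since the normal vectors attached to the edges and colors of distinct components occupy disjoint coordinate blocks, rank is additive and continues to equal order on a disjoint union. Therefore $x$ legitimately tracks rank and order simultaneously, with both additive, and $\frac{1}{r!}$ is the correct labeled weight. Granting these, the Exponential Formula \cite[Corollary~5.1.6]{StanleyBook2}, in the refined form carrying the passive edge variable $y$ already used in \cite[Example~5.2.2]{StanleyBook2}, gives $\Gamma^{(3)} = \exp\overline{\Gamma}^{(3)}$; substituting $\overline{\gamma}^{(3)}_{r,c}$ from the rewritten \cite[Proposition~3]{SONG2} produces the stated closed form. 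The only delicate point is the dual role of $x$ as both rank and order variable, and once full rank together with additivity are in place the conclusion is immediate.
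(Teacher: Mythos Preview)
Your proposal is correct and follows essentially the same route as the paper: both argue that a type~(3) graph decomposes into connected type~(3) components, that full rank makes rank coincide with order (so the exponential labeling is in terms of rank), and then invoke the Exponential Formula with $y$ carried along as a passive marker for cardinality. You have simply spelled out in more detail the two points the paper leaves implicit, namely why each component is again of type~(3) and why the rank and cardinality statistics are additive across components.
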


\begin{proof} The proof is a fairly straightforward application of the Exponential Formula \cite[Corollary~5.1.6]{StanleyBook2}. Let $G$ be a type (3) graph, and consider its decomposition $\coprod G_i$ into connected components, which are again of type (3).
Since type (3) graphs are always of full rank, the rank equals the order. We may now apply the Exponential Formula to obtain the assertion.
\end{proof}

Now we gather the results of the section to give the full generating function for the number of central graphs:
\setthm{1}
\begin{theorem}%\label{T:main}
The generating function for the number of central graphs is given by
\[
\begin{array}{l}
\Gamma(x,y,z)  =  \exp\left[ \left( \frac12 \log\left(1 + \sum_{n\ge 1, k\ge 0} \sum_{i=0}^n \binom ni \binom{i(n-i)}k \frac 1{n!}x^ny^k\right) - x\right) \frac zx \right] \cdot \\
\left( \sum_{r=0}^\infty \frac{2^r}{r!} x^r y^r\right) \cdot
\left( \sum \left( \sum_{t=1}^{c-r} 2 \overline{\gamma}^{(0)}_{r-1,c-t}\binom rt \right) \frac1{r!} x^r y^c\right) \cdot \\
\left(\exp\left(\log \left(1+ \sum_{n \ge 1, k \ge 0} \binom{\binom n2}k \frac1{n!} x^n y^k\right) - x \right) - \sum_{n\ge 2, k\ge 1} \bar b_{n,k} \frac1{n!}x^ny^k\right)
\end{array}
\]
\end{theorem}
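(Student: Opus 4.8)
The plan is to recognize the displayed formula as nothing more than the four-fold product $\Gamma = \prod_{i=0}^{3}\Gamma^{(i)}$ supplied by the Proposition of Section~\ref{S:decompo}, with each factor replaced by the explicit generating function computed in the corresponding subsection of Section~\ref{S:gen-functions}. Since that Proposition already establishes, via the unique decomposition $G = \coprod_{i=0}^3 G^{(i)}$ of a central graph into its four types, that $\prod_i \Gamma^{(i)}$ enumerates central graphs of given rank and cardinality, the only task remaining is substitution and collection of terms.

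Concretely I would proceed factor by factor. First I would insert the type $(0)$ generating function $\Gamma^{(0)}$, namely the $\exp\!\left[\left(\tfrac12\log(1+\cdots)-x\right)\tfrac{z}{x}\right]$ term, where the replacement $z \mapsto z/x$ records the passage from order to rank through the identity $\textup{rank} = \textup{order} - (\text{number of components})$ valid for bipartite graphs. Next I would place the type $(2)$ factor $\sum_{r\ge 0}\tfrac{2^r}{r!}x^ry^r$, immediate since a type $(2)$ graph is totally disconnected with each of its $r$ colored vertices independently bearing one of two colors, so rank, cardinality and order all coincide. I would then insert the type $(3)$ factor $\Gamma^{(3)} = \exp\overline{\Gamma}^{(3)}$ and the type $(1)$ factor $\exp\!\left(\log(1+\cdots)-x\right) - \sum_{n\ge 2,k\ge 1}\bar b_{n,k}\tfrac1{n!}x^ny^k$, the latter being the count of isolated-vertex-free non-colored graphs with the bipartite ones subtracted off. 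Multiplying these four series in the formal power series ring $\mathbb{Q}[[x,y,z]]$ and reordering gives the stated expression; no analytic convergence is needed, so the product is well defined termwise.

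The step I expect to demand the most care---and the main point to verify---is the type $(3)$ factor, because it is the only one carrying a genuine exponentiation built on top of the type $(0)$ data. Here two conversions must be reconciled simultaneously: the Exponential Formula, which turns the connected-component series $\overline{\Gamma}^{(3)}$ into $\Gamma^{(3)} = \exp\overline{\Gamma}^{(3)}$, and the rank-versus-order normalization that fixes the summation range of $t$ in the inner sum $\sum_{t} 2\,\overline{\gamma}^{(0)}_{r-1,c-t}\binom{r}{t}$. I would therefore check carefully that the exponential is applied to this factor (and not merely to its connected part) and that the index $t$ ranges over the correct interval before equating the product with the formula as displayed. Once this factor is pinned down, the remaining verification is purely bookkeeping: the four factors live in one power-series ring and are simply regrouped to match the order in which they appear in the statement.
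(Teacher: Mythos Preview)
Your approach is exactly the paper's: Theorem~1 is stated immediately after the four subsections of Section~\ref{S:gen-functions} with the sentence ``Now we gather the results of the section,'' so its proof is nothing more than invoking the Proposition $\Gamma=\prod_{i=0}^3\Gamma^{(i)}$ and substituting the four explicit series derived there. Your caution about the type~$(3)$ factor is warranted---the displayed formula in the theorem writes $\sum_{t=1}^{c-r}$ and omits the outer $\exp$, whereas the Lemma in \S3.4 has $\sum_{t=1}^{c-r+1}$ and $\Gamma^{(3)}=\exp\overline{\Gamma}^{(3)}$; these are evidently typographical slips in the statement rather than a different argument.
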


\section{The characteristic polynomial of $\mathcal J_n$} \label{S:proofs}

In this section, we formulate the characteristic polynomial  $\chi_{\mathcal J_n}$.    By the implication of the main theorem of \cite{SONG1}, the $t^{n-r}$-coefficient of $\chi_{\mathcal J_n}$ equals the sum
\[
\sum_{c} (-1)^c \gamma_{r,c}
\]where $\gamma_{r,c}$ is the number of central $3$-colored graphs on $[n]$ of rank $r$ and cardinality $c$.
We have  \setthm{2}
\begin{theorem}%\label{T:char-poly}
The characteristic polynomial of $\mathcal J_n$ is given by
\[
\chi_{\mathcal J_n}(t) = \sum_{r=0}^n  \left(\sum_{c \ge 1} \, \sum_{r+\nu \le n} \binom{n}{r+\nu}(-1)^c \, \Gamma_{r,c,\nu}\right) \, t^{n-r}.
\]
\end{theorem}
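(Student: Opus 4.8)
The plan is to reduce the computation entirely to the combinatorics already packaged in $\Gamma(x,y,z)$. The starting point is the relation quoted just above the statement: by the main theorem of \cite{SONG1} a subarrangement $\cB$ of $\cJ_n$ is central exactly when its associated $3$-colored graph is central, while the hyperplane count $|\cB|$ is the cardinality of the graph and $\rank(\cB)$ is its rank. Grouping the defining sum $\chi_{\cJ_n}(t)=\sum_{\cB}(-1)^{|\cB|}t^{n-\rank(\cB)}$ by rank $r$ and cardinality $c$ therefore turns it into $\chi_{\cJ_n}(t)=\sum_{r=0}^n\bigl(\sum_c(-1)^c\gamma_{r,c}\bigr)t^{n-r}$, where $\gamma_{r,c}$ is the number of central $3$-colored graphs on the \emph{full} vertex set $[n]$ of rank $r$ and cardinality $c$. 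The whole theorem then comes down to the single identity $\gamma_{r,c}=\sum_{r+\nu\le n}\binom{n}{r+\nu}\Gamma_{r,c,\nu}$, which I would prove combinatorially.

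To establish this identity I would exploit the canonical decomposition $G=\coprod_{i=0}^3 G^{(i)}$ together with the observation that the four types account for every vertex of $G$ \emph{except} the isolated non-colored ones, which carry neither an edge nor a color and hence contribute nothing to the rank or the cardinality. First I would record, exactly as in the proof of the Proposition of Section~\ref{S:decompo}, that the typed part $\coprod G^{(i)}$ has order $r+\nu$: types $(1),(2),(3)$ are of full rank so their orders equal their ranks, while the bipartite type $(0)$ part has order equal to its rank plus its number of components $\nu$ by \cite[Theorem~2]{SONG2}. Since $\Gamma_{r,c,\nu}$ is, by that same Proposition, precisely the number of labeled central graphs of rank $r$, cardinality $c$ and $\nu$ bipartite components, and such graphs live on $r+\nu$ vertices with no isolated non-colored vertex, a central graph on all of $[n]$ is reconstructed uniquely by choosing the $(r+\nu)$-subset $S\subseteq[n]$ supporting its typed part, placing one of the $\Gamma_{r,c,\nu}$ structures on $S$, and declaring the remaining $n-(r+\nu)$ vertices isolated and non-colored. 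This yields the factor $\binom{n}{r+\nu}$ and, after summing over $\nu$ with $r+\nu\le n$, the asserted identity; reading $\Gamma_{r,c,\nu}$ off the expansion $\Gamma(x,y,z)=\sum\frac{\Gamma_{r,c,\nu}}{(r+\nu)!}x^r y^c z^\nu$ then gives the stated formula.

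The step needing the most care is verifying that this correspondence is a genuine bijection that \emph{preserves centrality}: adjoining isolated non-colored vertices adds no hyperplane and imposes no path-parity condition between colored vertices, so $G$ is central if and only if its typed part is, and conversely every central graph has a well-defined typed part obtained by deleting its isolated non-colored vertices. The one boundary point I would check separately is the degenerate term $r=c=\nu=0$, which corresponds to the empty subarrangement and must supply the leading monomial $t^n$ with coefficient $\Gamma_{0,0,0}=1$; for every $r\ge 1$ one automatically has $c\ge r\ge 1$, so no genuinely interesting term is affected and the summation range in $c$ only matters at the rank-zero level. Once this bijection and the order bookkeeping $r+\nu$ are pinned down, the remainder is a routine substitution.
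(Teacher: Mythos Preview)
Your approach is essentially the same as the paper's: both reduce to the identity $\gamma_{r,c}=\sum_{r+\nu\le n}\binom{n}{r+\nu}\Gamma_{r,c,\nu}$, obtained by observing that $\Gamma_{r,c,\nu}$ counts the relevant central graphs on a vertex set of size $r+\nu$ and that the remaining $n-(r+\nu)$ vertices of $[n]$ are the isolated non-colored ones. Your write-up is simply more detailed, spelling out the bijection and the centrality preservation that the paper leaves implicit, and you correctly flag the boundary issue at $r=c=0$ that the stated range $c\ge 1$ glosses over.
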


\begin{proof}
 We note that  the coefficient $\Gamma_{r,c,\nu}$ is the number of central graphs on $[r+\nu]$, as opposed to $[n]$, of rank $r$, cardinality $c$ with $\nu$ connected components. So we have
\[
\gamma_{r,c} = \sum_{r+\nu \le n} \binom n{r+\nu} \Gamma_{r,c,\nu}.
\]
\end{proof}

The characteristic polynomials of $\mathcal J_2$ and of $\mathcal J_3$ were computed by hand in \cite{SONG1}. Here, we use the generating function to verify the computation.

For $n = 2$, \[ \Gamma = \Gamma_0\Gamma_2\Gamma_3 = \left(\frac{x y z}{2}+1\right)\left(2 x^2 y^2+4 x y+1\right) \left(x^2 y^3+2 x^2 y^2+1\right).\]  Note that $\Gamma_1$ is trivial. The corresponding characteristic polynomial is $t^2 - 5t + 6$, obtained by using Theorem~\ref{T:char-poly}.

%\subsubsection{$n = 3$}
For $n=3,$  the generating function for the central graphs corresponding to $\mathcal J_3$ is
\[
\begin{array}{llll}
\Gamma  = \Gamma_0\Gamma_1\Gamma_2\Gamma_3 & = & \left(\frac{1}{48} x^3 y^3 z^3+\frac{1}{4} x^3 y^3 z^2+\frac{1}{8} x^2 y^2 z^2+\frac{1}{2} x^2 y^2 z+\frac{x y z}{2}+1\right)\cdot\\
&&\left(\frac{x^3 y^3}{6}+1\right) \left(\frac{4 x^3 y^3}{3}+2 x^2 y^2+2 x y+1\right) \cdot \\
&& \left(x^3 y^5+3 x^3 y^4+3 x^3 y^3+x^2 y^3+2 x^2
   y^2+1\right).
   \end{array}
   \]
Using Theorem~\ref{T:char-poly}, we find that
\[ \chi_{\cJ_3}(t) = t^3 - 9t^2 + 27t - 27
\]
which agrees with the computation in \cite{SONG1}.

%We have  \setthm{2}
%\begin{theorem}%\label{T:char-poly}
%The characteristic polynomial of $\mathcal J_n$ is given by
%\[
%\chi_{\mathcal J_n}(t) = \sum_{r=0}^n  \left(\sum_{c \ge 1} \, \sum_{r+\nu \le n} \binom{n}{r+\nu}(-1)^c \, \Gamma_{r,c,\nu}\right) \, t^{n-r}.
%\]
%\end{theorem}

%\begin{proof} By the implication of the main theorem of \cite{SONG1}, the $t^{n-r}$-coefficient of $\chi_{\mathcal J_n}$ equals the sum
%\[
%\sum_{c} (-1)^c \gamma_{r,c}
%\]
%where $\gamma_{r,c}$ is the number of central $3$-colored graphs on $[n]$ of rank $r$ and cardinality $c$.

%Note that the coefficient $\Gamma_{r,c,\nu}$ is the number of central graphs on $[r+\nu]$, as opposed to $[n]$, of rank $r$, cardinality $c$ with $\nu$ connected components. So we have
%\[
%\gamma_{r,c} = \sum_{r+\nu \le n} \binom n{r+\nu} \Gamma_{r,c,\nu}.
%\]
%\end{proof}

\appendix
\section*{Appendix}
\addcontentsline{toc}{section}{Appendices}
\renewcommand{\thesubsection}{\Alph{subsection}}

\subsection*{Numerical results}
%\subsubsection{$n=2$}

With the aid of Mathematica, we computed, with increasing running time, characteristic polynomials of higher order.   We list below  characteristic polynomials of degrees up to $n=10.$

\[\begin{aligned}
\mathcal  \chi_{\mathcal J_2} (t)  & = t^2 - 5t + 6 \\
\mathcal \chi_{\mathcal J_3 }(t)  & = t^3 - 9t^2 + 27t - 27\\
\mathcal \chi_{\mathcal J_4 }(t)  & = t^4-14t^3+75t^2-168t+104     \\
\mathcal \chi_{\mathcal J_5 }(t)  & = t^5 -20 t^4+165 t^3-695 t^2+1465
   t-3649    \\
\mathcal \chi_{\mathcal J_6 }(t)  & = t^6 -27 t^5+315 t^4-2010 t^3+7365 t^2-9285 t+97605   \\
\mathcal \chi_{\mathcal J_7 }(t)  & = t^7-35 t^6+546 t^5-4865 t^4+26565 t^3-92386 t^2-252245 t-3082889 \\
\mathcal \chi_{\mathcal J_8 }(t)  & = t^8 -44 t^7+882 t^6-10402 t^5+78365 t^4-382662 t^3  +1959447 t^2+22977452 t \\  & \quad \quad +104683724 \\
\mathcal \chi_{\mathcal J_9 }(t)  & = t^9 -54 t^8+1350 t^7-20286 t^6+200403 t^5-1338708 t^4+8421021 t^3  \\ \quad & \quad \quad+105101892 t^2 +1112954274 t+866974176 \\
\mathcal \chi_{\mathcal J_{10} }(t)  & = t^{10}  -65 t^9+1980 t^8-36840 t^7+460215 t^6-4008081 t^5+24881535
   t^4 \\ & \quad\quad+52962615 t^3+7605232140 t^2+71654230070 t+142378721936
\end{aligned}
\]
%\subsection{Code listing}\label{S:code}

According to Theorem~\ref{T:Zas75}, the number of bounded chambers  in $\mathbb R^n$ divided by hyerplanes in $\mathcal{J}_n$ are

\[ \begin{array}{|c|c||c|c|} \hline
n & \quad  (-1)^n \mathcal \chi_{\mathcal J_{n} } (-1) \quad  & n &   \quad (-1)^n\mathcal \chi_{\mathcal J_{n} } (-1) \quad  \\ \hline
 3 &  64 &    7 &   170 770  \\
  4 &  362 &      8 & 84138075  \\
 5 &   5995 &   9 & 150860029   \\
 6 &   116608 &    10 &  78306150108   \\
  \hline
\end{array}
\]

%\section*{References}

%    Text of article.

%    Bibliographies can be prepared with BibTeX using amsplain,
%    amsalpha, or (for "historical" overviews) natbib style.
\bibliographystyle{amsplain}

\bibliography{generate}

\providecommand{\bysame}{\leavevmode\hbox to3em{\hrulefill}\thinspace}
\providecommand{\MR}{\relax\ifhmode\unskip\space\fi MR }
% \MRhref is called by the amsart/book/proc definition of \MR.
\providecommand{\MRhref}[2]{%
  \href{http://www.ams.org/mathscinet-getitem?mr=#1}{#2}
}
\providecommand{\href}[2]{#2}
\begin{thebibliography}{10}

\bibitem{Atha}
Christos~A. Athanasiadis, \emph{Characteristic polynomials of subspace
  arrangements and finite fields}, Adv. Math. \textbf{122} (1996), no.~2,
  193--233. \MR{1409420}

\bibitem{Hanlon}
Phil Hanlon, \emph{The enumeration of bipartite graphs}, Discrete Math.
  \textbf{28} (1979), no.~1, 49--57. \MR{542935 (81c:05051)}

\bibitem{Harary58}
Frank Harary, \emph{On the number of bi-colored graphs}, Pacific J. Math.
  \textbf{8} (1958), 743--755. \MR{0103834 (21 \#2598)}

\bibitem{Harary73}
Frank Harary and Edgar~M. Palmer, \emph{Graphical enumeration}, Academic Press,
  New York-London, 1973. \MR{0357214 (50 \#9682)}

\bibitem{Harary63}
Frank Harary and Geert Prins, \emph{Enumeration of bicolourable graphs}, Canad.
  J. Math. \textbf{15} (1963), 237--248. \MR{0165512 (29 \#2794)}

\bibitem{SONG2}
Joungmin Song, \emph{Characteristic polynomial of certain hyperplane
  arrangements through graph theory}, (submitted for publication) (2015).

\bibitem{SONG3}
\bysame, \emph{Enumeration of graphs with given weighted number of connected
  components}, (submitted for publication) (2015), arXiv:1606.08001 [math.CO].

\bibitem{SONG1}
\bysame, \emph{On certain hyperplane arrangements and colored graphs}, Bull.
  Korean Math. Soc. (recommended for publication pending revision) (2015),
  arXiv:1606.07874 [math.CO].

\bibitem{StanleyBook2}
Richard~P. Stanley, \emph{Enumerative combinatorics. {V}ol. 2}, Cambridge
  Studies in Advanced Mathematics, vol.~62, Cambridge University Press,
  Cambridge, 1999, With a foreword by Gian-Carlo Rota and appendix 1 by Sergey
  Fomin. \MR{1676282 (2000k:05026)}

\bibitem{Ueno}
Makoto Ueno and Shinsei Tazawa, \emph{Enumeration of bipartite
  self-complementary graphs}, Graphs Combin. \textbf{30} (2014), no.~2,
  471--477. \MR{3167022}

\bibitem{Zas75}
Thomas Zaslavsky, \emph{Counting the faces of cut-up spaces}, Bull. Amer. Math.
  Soc. \textbf{81} (1975), no.~5, 916--918. \MR{0400066 (53 \#3901)}

\bibitem{Zas-signed}
\bysame, \emph{Bicircular geometry and the lattice of forests of a graph},
  Quart. J. Math. Oxford Ser. (2) \textbf{33} (1982), no.~132, 493--511.
  \MR{679818}

\end{thebibliography}

%    Insert the bibliography data here.

\end{document}